\newtheorem{theorem}{Theorem}
\newtheorem{corollary}{Corollary}
\newtheorem{proposition}{Proposition}
\newtheorem{problem}{Problem}
\theoremstyle{definition}
\newtheorem{definition}{Definition}
\newtheorem{remark}{Remark}
\theoremstyle{plain}
\newcommand{\calF}{\mathcal{F}}
\newcommand{\ignore}[1]{}
\title{Semi-Strong Coloring of Intersecting Hypergraphs}
\author{
Eric Blais\thanks{
Computer Science Department, 
Carnegie Mellon University,
Pittsburgh, PA 15213, USA. 
Email: \texttt{eblais@cs.cmu.edu}.}
\and
Amit Weinstein\thanks{
Blavatnik School of Computer Science, Tel Aviv University, Tel Aviv
69978, Israel. Email: \texttt{amitw@tau.ac.il}. Research supported in part by
an ERC Advanced grant and by the Israeli Centers of Research
Excellence (I-CORE) program.}
\and
Yuichi Yoshida\thanks{
  School of Informatics, Kyoto University, Kyoto 606-8501, Japan, and
  Preferred Infrastructure, Inc., Tokyo 113-0033, Japan.
  Email: \texttt{yyoshida@kuis.kyoto-u.ac.jp}
}
}
\begin{document}
\maketitle

\begin{abstract}
For any $c \ge 2$, a \emph{$c$-strong coloring} of the hypergraph $G$ is an 
assignment of colors to the vertices of $G$ such that for every edge $e$ of $G$,
the vertices of $e$ are colored by at least $\min\{c,|e|\}$ distinct colors.
The hypergraph $G$ is \emph{$t$-intersecting} if every two edges of $G$
have at least $t$ vertices in common. 
We ask: for fixed $c \ge 2$ and $t \ge 1$,
what is the minimum number of colors that is sufficient to $c$-strong color
any $t$-intersecting hypergraphs?

The purpose of this note is to answer the question for some values of $t$ and $c$ and,
more importantly, to describe the settings for which the question is still open.  
We show that when $t \le c-2$, no finite number of colors is sufficient to $c$-strong
color all $t$-intersecting hypergraphs. It is still unknown whether a
 finite number of colors suffices for the same task when $t = c-1$ and $c > 2$.
In the last case, when $t \ge c$, we show with a probabilistic argument
that a finite number of colors is sufficient to $c$-strong
color all $t$-intersecting hypergraphs, but
a large gap still remains between the best upper 
and lower bounds on this number.
\end{abstract}

\section{Introduction}
The problem of coloring graphs and hypergraphs has a long and rich history (see, 
e.g.,~\cite{JT94,MR01}). In the case of graphs, the notion of vertex coloring has a single 
natural definition: an assignment of labels to the vertices of a graph is a \emph{proper coloring}
if the endpoints of any edge in the graph are assigned distinct labels.
For hypergraphs, however, there exist different natural definitions of vertex coloring.
The most common definition, also called \emph{weak} coloring, is an 
assignment of colors to the vertices such that no edge is monochromatic. Another common 
definition, called \emph{strong} coloring, is an assignment of colors to the vertices such that 
all the vertices contained in an edge have distinct colors. 

There is a more general notion of hypergraph vertex coloring that encompasses both the weak and strong coloring definitions. We call this notion \emph{semi-strong} coloring.

\begin{definition}[Semi-strong coloring]
For a fixed $c \ge 2$, a \emph{$c$-strong coloring} of the hypergraph $G$ is an assignment of colors
to its vertices such that each edge $e$ of $G$ covers vertices with at least $\min\{c, |e|\}$ distinct colors.
The \emph{$c$-strong chromatic number} of $G$, denoted $\chi(G, c)$, is the minimum number of
colors required to $c$-strong color $G$.
\end{definition}

The definition of weak coloring corresponds to that of 2-strong coloring, and the definition of strong coloring is equivalent to $\infty$-strong coloring.\footnote{More generally, the notion of $c$-strong coloring of a hypergraph $G$ is equivalent to the strong coloring of $G$ whenever $c$ is at least as large as the cardinality of the largest edge in $G$.}

\medskip

The main focus of this note is the semi-strong coloring of intersecting hypergraphs. A hypergraph is 
\emph{$t$-intersecting} if the intersection of any two of its edges contains at least $t$ vertices. 
The set of edges of a $t$-intersecting hypergraph is often referred to as a $t$-intersecting family. 
Our goal is to determine the minimum number of colors that are sufficient to $c$-strong color any
$t$-intersecting hypergraph.

\begin{definition}[Chromatic number of intersecting hypergraphs]
Given two integers $c \geq 2$ and $t \geq 0$, the \emph{$c$-strong chromatic number of $t$-intersecting hypergraphs}, denoted $\chi(t,c)$, is the minimum number of colors which suffices to $c$-strong color $\emph{any}$ $t$-intersecting hypergraph.
\end{definition}

With this notation, our goal can be restated as follows: determine $\chi(t,c)$ for every $t \ge 0$ and every $c \ge 2$.
A first step toward that goal is to determine when $\chi(t,c)$ is finite and when it is unbounded. As we will see below, $\chi(t,c)$ is finite whenever $t \ge c$ and it is unbounded whenever $t \le c-2$.  When $c = 2$ and $t = c-1 = 1$, we also know that $\chi(1,2)$ is finite.  The remaining cases remain open.

\begin{problem}
\label{prob:finite}
Determine whether $\chi(c-1,c)$ is finite or not for every $c > 2$.
\end{problem}

In particular, the problem of determining whether $\chi(2,3)$ is finite or not is still open. In other words, is there an upper bound on the number of colors required to $3$-strong color $2$-intersecting hypergraphs?

\medskip

When $\chi(t,c)$ is finite, a natural question is to determine its precise value. As we see in the next section, this goal is easily accomplished in the case where $c = 2$.  For every $c > 2$, however, the picture is much less clear.  In Section~\ref{sec:lb}, we show that for every $t \ge c \ge 2$, we have the lower bound $\chi(t,c) \ge 2(c-1)$.  It seems reasonable to believe that this lower bound is tight. The best upper bound for the same chromatic numbers, however, is far from tight. 
We thus have the following open problem.

\begin{problem}
\label{prob:cc}
Determine whether $\chi(c,c) = 2(c - 1)$ for every $c > 2$.
\end{problem}

For any $t' > t$, the inequality $\chi(t',c) \leq \chi(t,c)$ follows immediately from the observation that $t'$-intersecting hypergraphs are also $t$-intersecting.  A positive answer to Problem~\ref{prob:cc} would therefore immediately imply that $\chi(t,c) = 2(c-1)$ for every $t \ge c$.  It might be easier to first determine whether $\chi(t,c) = 2(c-1)$ for values of $t$ that are much greater than $c$.  But even the problem of determining whether the limit of $\chi(t,c)$ as $t \to \infty$ equals $2(c-1)$ is open.

\begin{problem}
For every $c > 2$, determine whether $\lim_{t \to \infty} \chi(t,c) = 2(c-1)$.
\end{problem}

For the last problem we return to the chromatic number $\chi(c-1,c)$. If it is finite, can we determine its exact value?  When $c = 2$, we know that $\chi(1,2) = 3 = 2c-1$.  For larger values of $c$, we have the matching lower bound $\chi(c-1,c) \ge 2c-1$. The final problem asks whether this bound is tight.

\begin{problem}
For every $c > 2$, determine whether $\chi(c-1,c) = 2c - 1$.
\end{problem}

In the rest of this note, we present the known results on the chromatic numbers of intersecting hypergraphs.  Section~\ref{sec:weak} presents the folklore results that establish the exact value of $\chi(t,2)$ for every $t \ge 0$.  Section~\ref{sec:lb} establishes lower bounds on the values of $\chi(t,c)$ for every $t \ge 0$, and Section~\ref{sec:ub} introduces the probabilistic argument for obtaining upper bounds on $\chi(t,c)$ when $t \ge c-1$.

\section{Results for weak coloring}
\label{sec:weak}

When we restrict our attention to the $2$-strong (i.e., weak) coloring of hypergraphs, we have a complete characterization of the chromatic number $\chi(t,2)$ for every $t \ge 0$.

\begin{proposition}%[Folklore]
\label{prop:weak}
The $2$-strong chromatic numbers for $t$-intersecting hypergraphs are
\begin{align*}
\chi(0, 2) &= \infty\ , \\
\chi(1, 2) &= 3\ \mbox{, and} \\
\chi(t, 2) &= 2 \mbox{ for every $t \geq 2$.}
\end{align*}
\end{proposition}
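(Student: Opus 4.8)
The plan is to treat the three claims separately, since they have quite different flavors. All three lower bounds come from explicit hypergraphs. For $\chi(0,2)=\infty$: a $0$-intersecting hypergraph is an arbitrary hypergraph, and $K_n$ — viewed as a $2$-uniform hypergraph — has $\chi(K_n,2)=n$, because a $2$-strong coloring of a graph is exactly a proper coloring. For $\chi(1,2)\ge 3$: the triangle $K_3$, i.e.\ the hypergraph on $\{1,2,3\}$ whose edges are the three $2$-element subsets, is $1$-intersecting (any two of its edges meet), and again a $2$-strong coloring of it is a proper coloring of $K_3$, which needs $3$ colors. For $\chi(t,2)\ge 2$ with $t\ge 2$: for a fixed $t$-set $A$ and distinct $x,y\notin A$, the hypergraph with edges $A\cup\{x\}$ and $A\cup\{y\}$ is $t$-intersecting and cannot be $2$-strong colored with a single color.

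For the upper bound $\chi(t,2)\le 2$ when $t\ge 2$ it suffices — since a $t$-intersecting hypergraph with $t\ge 2$ is in particular $2$-intersecting, and $\chi(t',c)\le\chi(t,c)$ for $t'\ge t$ — to show that every $2$-intersecting hypergraph $G$ is weakly $2$-colorable. If $G$ has at most one edge this is clear, so assume $G$ has at least two edges; then every edge has size at least $2$. Let $e_0$ be an edge of minimum cardinality, fix $a\in e_0$, color $e_0\setminus\{a\}$ with color $1$, and color every other vertex with color $2$. No edge $g$ is then monochromatic: $g$ meets $e_0$ in at least two vertices — if $g=e_0$ because $|e_0|\ge 2$, and otherwise because $G$ is $2$-intersecting — so $g$ contains a vertex of $e_0\setminus\{a\}$ and hence gets color $1$; and $g$ is not contained in $e_0\setminus\{a\}$, since that would make $g$ a proper subset of $e_0$, contradicting minimality, so $g$ also gets color $2$.

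For the upper bound $\chi(1,2)\le 3$ the plan is a case analysis on a $1$-intersecting hypergraph $G$. If some vertex $v$ lies in every edge, color $v$ with $1$ and all other vertices with $2$; every edge of size at least $2$ then sees both colors, and edges of size $1$ impose nothing. If some edge $e_0$ has size at most $2$, then $e_0$ is a transversal of $G$ of size at most $2$, say $e_0\subseteq\{u,w\}$; color $u$ with $1$, $w$ with $2$, and everything else with $3$. The classes $\{u\}$ and $\{w\}$ contain no edge (all edges have size at least $2$, the trivial case of $G$ having a single small edge being handled directly), and the class $V\setminus\{u,w\}$ contains no edge because $\{u,w\}$ meets every edge.

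The remaining case — every edge has size at least $3$, no vertex lies in all edges, and (by the previous case) $G$ has no transversal of size $2$, the Fano plane being the prototype — is the crux. Here the plan is to produce an independent set $X\subseteq V$ (a set containing no edge) such that the sub‑hypergraph $G_X$ consisting of the edges $g$ with $g\cap X=\emptyset$ is weakly $2$-colorable; natural candidates are a single well-chosen vertex or, more robustly, a maximal independent set, which should keep $G_X$ small relative to the intersecting structure of $G$. Given such an $X$, color $X$ with color $3$ and use a weak $2$-coloring of $G_X$ on $V\setminus X$: every edge meeting $X$ is non‑monochromatic, since it contains a color‑$3$ vertex and, because $X$ is independent, also a vertex outside $X$; and every edge disjoint from $X$ lies in $G_X$ and is non‑monochromatic by the choice of the $2$-coloring. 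The main obstacle is exactly this last step — showing that for every such $G$ one can choose $X$ so that $G_X$ has property B — which is where the combinatorics of $1$-intersecting families with large covering number (the same phenomenon that makes the Fano plane fail to be weakly $2$-colorable) genuinely enters; it is the only part of Proposition~\ref{prop:weak} whose proof goes beyond routine verification.
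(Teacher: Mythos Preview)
Your treatment of $\chi(0,2)=\infty$, the lower bounds, and the upper bound $\chi(t,2)\le 2$ for $t\ge 2$ is correct. For the latter you use a different (and cleaner) argument than the paper: the paper runs a greedy two-coloring and derives a contradiction from a hypothetical monochromatic edge, whereas you pick a minimum-cardinality edge $e_0$, color $e_0\setminus\{a\}$ with one color and everything else with the other. Your argument is more explicit and avoids the case analysis of the greedy proof.

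The genuine gap is in your proof of $\chi(1,2)\le 3$. You dispose of the cases where some vertex lies in every edge or some edge has size at most~$2$, and then declare the remaining case (all edges of size $\ge 3$, no common vertex) to be ``the crux'', proposing to find an independent set $X$ so that the residual hypergraph $G_X$ has property~B --- but you do not carry this out. As written, the proof is incomplete precisely at the point you identify as the main obstacle. (A side remark: your parenthetical ``by the previous case $G$ has no transversal of size~$2$'' does not follow; Case~2 only rules out \emph{edges} of size~$\le 2$, not transversals.)

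The irony is that no such obstacle exists: the paper's argument is a one-line application of the very idea you used for $t\ge 2$. Take any edge $e$ that is minimal under inclusion (equivalently, of minimum cardinality), color its vertices with colors $1$ and $2$ non-trivially, and color all remaining vertices with color~$3$. Then $e$ itself sees two colors; and every other edge $e'$ meets $e$ (so it sees a color from $\{1,2\}$) and is not a subset of $e$ (else $e'\subsetneq e$, contradicting minimality), so it also sees color~$3$. This handles your ``crux'' case --- including the Fano plane --- with no further combinatorics. In short, your minimal-edge trick for $t\ge 2$ already proves the $t=1$ upper bound; you simply didn't apply it there.
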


\begin{proof}
The case where $t=0$ is trivial: since every hypergraph is $0$-intersecting, no finite number of colors suffices to color all $0$-intersecting hypergraphs. 

Consider now the case where $t=1$. The graph $G = ([3], {[3] \choose 2})$ is $1$-intersecting and
has chromatic number $\chi(G,2) = 3$, so $\chi(1,2) \ge 3$.  To establish a matching upper bound, fix
$G$ to be any $1$-intersecting hypergraph whose edges all have size at least $2$.  Let $e$ be any edge
of $G$ that does not strictly contain any other edge $e'$ of $G$. Color the vertices of $e$ with $2$ colors
in any arbitrary way,\footnote{Except, of course, for the degenerate colorings that apply only one of the two colors to all the vertices in the edge.} and color the remaining vertices of $G$ with a third color.  The resulting coloring is valid: $e$ covers vertices with two different colors and every other edge contains one vertex in $e$ and a vertex outside of $e$.

We are left with the case $t\ge2$. Notice that $\chi(t, c)$ is monotone non-increasing with $t$ and therefore
showing that $\chi(2,2)=2$ completes the proof (as 2 colors are obviously required). We again provide
a simple coloring algorithm given a 2-intersecting hypergraph $G$. Fix an arbitrary order on the vertices of $G$. Color the vertices one by one according to this order. In this process, assign the current vertex the color blue unless doing so causes an edge to become fully colored monochromatically; 
in that case color the vertex red. We claim that this process succeeds in generating a valid coloring of $G$. To prove the claim, assume on the contrary that the process causes some edge $e$ to be monochromatic. Let $v$ be the last vertex in $e$ that was colored in the process.  The vertex $v$ must have been colored red, since by definition if the rest of $e$ contained all blue vertices, we would not color $v$ blue as well. So all the remaining vertices covered by $e$ are red.  But if we colored $v$ red, it must be because there is some other edge $e'$ in $G$ for which $v$ is the last vertex to be colored and where all other vertices covered by $e'$ have been colored blue. But since $G$ is $2$-intersecting, $e$ and $e'$ must both cover some other vertex $v' \neq v$. This is a contradiction, since of course $v'$ cannot be colored both red and blue.
\end{proof}

\section{General lower bounds}
\label{sec:lb}

As we have mentioned in the introduction, the trivial observation that $(t+1)$-intersecting hypergraphs are also $t$-intersecting implies that the $c$-strong chromatic number of $t$-intersecting hypergraphs is non-increasing in $t$. In other words, for any $c \ge 2$ and any $t \ge 0$, we have $\chi(t+1,c) \le \chi(t,c)$.  
The following proposition shows that the semi-strong chromatic number of intersecting hypergraphs satisfies a different monotonicity property when we increase both $t$ and $c$.

\begin{proposition}
\label{prop:increasing}
For any $c \ge 2$ and any $t \ge 0$,
we have
$
\chi(t+1, c+1) \geq \chi(t, c) + 1.
$
\end{proposition}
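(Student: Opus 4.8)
The plan is to prove the contrapositive-flavored statement: given any $t$-intersecting hypergraph $G$ whose $c$-strong chromatic number is large, build a $(t+1)$-intersecting hypergraph $G'$ whose $(c+1)$-strong chromatic number is at least one larger. Concretely, suppose $\chi(t,c) = k$ (if $\chi(t,c) = \infty$ there is nothing to prove, since then $\chi(t+1,c+1) = \infty$ too would follow from the same construction applied to arbitrarily hard instances). Pick a $t$-intersecting hypergraph $G = (V,E)$ with $\chi(G,c) = k$. I would form $G'$ by adding a single new vertex $u \notin V$ to every edge: set $V' = V \cup \{u\}$ and $E' = \{\, e \cup \{u\} : e \in E \,\}$. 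Since any two edges of $G$ share at least $t$ vertices, the corresponding edges of $G'$ share those $t$ vertices plus $u$, so $G'$ is $(t+1)$-intersecting. The cardinalities go up by one: $|e \cup \{u\}| = |e| + 1$.

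The heart of the argument is the claim that $\chi(G', c+1) \ge \chi(G,c) + 1 = k+1$. I would argue by contradiction: suppose $G'$ has a $(c+1)$-strong coloring using at most $k$ colors. The new vertex $u$ receives some color, say color $0$. Now restrict this coloring to $V$; this uses at most $k$ colors, and I claim it is a $c$-strong coloring of $G$, contradicting $\chi(G,c) = k$ unless it uses all $k$ colors — but even then it must fail $c$-strongness on some edge, and that is where the extra color is forced. Let me be more careful: for each edge $e \in E$, the edge $e \cup \{u\}$ sees at least $\min\{c+1, |e|+1\}$ distinct colors in the $G'$-coloring. Removing the single vertex $u$ removes at most one color from the palette seen on that edge, so $e$ sees at least $\min\{c+1,|e|+1\} - 1 = \min\{c, |e|\}$ distinct colors under the restricted coloring. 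Hence the restriction is a valid $c$-strong coloring of $G$ using at most $k$ colors — fine so far, that is consistent with $\chi(G,c)=k$. The contradiction I actually need is sharper: I should instead take a hypergraph witnessing that $k-1$ colors do \emph{not} suffice, i.e. $\chi(G,c) = k$ means some $t$-intersecting $G$ has no $c$-strong coloring with $k-1$ colors. Then if $G'$ had a $(c+1)$-strong coloring with $k$ colors, deleting the color class of $u$ entirely (recoloring those vertices does not help) — more precisely, restricting to $V$ and observing as above that we get a $c$-strong coloring of $G$; but this coloring might still use $k$ colors, so I need to also delete $u$'s color. The clean fix: in the restricted coloring of $G$, the color $0$ (the color of $u$) may still appear on vertices of $V$; we cannot simply drop it. So the right setup is to choose $G$ with $\chi(G,c) \ge k$ and show $\chi(G',c+1) \ge k+1$ directly, using that every edge $e \cup \{u\}$ must see $u$'s color \emph{plus} at least $\min\{c,|e|\}$ other colors on $e$ itself — wait, that is not automatic either. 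Let me restructure.

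The correct and clean argument: assume for contradiction that $G'$ admits a $(c+1)$-strong coloring $\phi$ with colors from a set $C$, $|C| \le k$. Let $\gamma = \phi(u)$. Define a coloring $\psi$ of $G$ on vertex set $V$ by $\psi(v) = \phi(v)$, but we will show $\psi$ actually avoids using color $\gamma$ in an essential way — no. Here is the genuinely correct version. For every edge $e$ of $G$, the set $\phi(e \cup \{u\}) = \phi(e) \cup \{\gamma\}$ has size at least $\min\{c+1, |e|+1\}$. Therefore $|\phi(e)| \ge \min\{c+1,|e|+1\} - 1 = \min\{c,|e|\}$, so $\phi$ restricted to $V$ is a $c$-strong coloring of $G$ — and it uses at most $k$ colors. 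This shows only $\chi(G,c) \le k$, which is \emph{consistent} with $\chi(G,c) = k$, giving no contradiction. The resolution is that Proposition~\ref{prop:weak}'s pattern suggests the bound is genuinely by one, so I must extract one more color. Observe: if $\gamma \notin \phi(e)$ for some edge $e$, then $|\phi(e)| \ge \min\{c+1,|e|+1\} \ge \min\{c,|e|\}+1$, i.e. $e$ is over-colored. If instead $\gamma \in \phi(e)$ for \emph{every} edge $e$, then recolor $u$ — actually then I can delete $\gamma$ from $u$ only and recolor $u$ with any color already forced; the point is the $k$ colors used on $V$ include $\gamma$ but we can try to recolor the $\gamma$-vertices of $V$. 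I think the cleanest route, and the one I would write up, is: pick $G$ to be a minimal witness, build $G'$ as above, and show $\chi(G', c+1) > \chi(G,c)$ by proving the contrapositive that a $(c+1)$-strong coloring of $G'$ with $m$ colors yields a $c$-strong coloring of $G$ with $m-1$ colors, via: take the $(c+1)$-coloring of $G'$, restrict to $V$ to get a $c$-strong coloring $\phi$ of $G$ using colors $\phi(V) \subseteq C$ with $|C| \le m$; since $u$ was in every edge, the color $\gamma = \phi(u)$ need not appear on $V$ at all if we are clever — no, it may. The honest main obstacle is exactly this color-accounting: showing one can recover a $c$-strong coloring of $G$ that genuinely uses one fewer color than the $(c+1)$-strong coloring of $G'$, rather than merely the same number. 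I expect this step — arguing that the color of $u$ can be "freed up" on $V$ because every edge of $G'$ already spends a color on $u$ — to be the crux, likely handled by the observation that if some color is used on $V$ only within a single edge's worth of slack one can merge it, or more simply by choosing $G$ so that every $c$-strong coloring of $G$ uses \emph{exactly} $\chi(t,c)$ colors tightly on some edge, forcing the $(c+1)$-strong coloring of $G'$ to use that many colors on $V$ beyond $\gamma$. Everything else — the $(t+1)$-intersection property and the cardinality bookkeeping — is immediate.
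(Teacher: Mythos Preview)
Your construction is exactly the paper's: add a single new vertex $u$ to every edge of a witness $G$ with $\chi(G,c)=\chi(t,c)$, obtaining a $(t+1)$-intersecting $G'$, and then argue that any $(c+1)$-strong coloring of $G'$ with $\ell$ colors yields a $c$-strong coloring of $G$ with $\ell-1$ colors. You correctly isolate the crux --- freeing up the color $\gamma=\phi(u)$ on $V$ --- but you never close the gap, and your proposal ends by conceding it is the ``main obstacle.''

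The missing observation is short and is precisely what the paper uses. From
\[
|\phi(e)\cup\{\gamma\}|=|\phi(e\cup\{u\})|\ \ge\ \min\{c+1,|e|+1\}
\]
you concluded only $|\phi(e)|\ge\min\{c,|e|\}$, which is too weak. But the same identity gives the stronger
\[
|\phi(e)\setminus\{\gamma\}| \;=\; |\phi(e)\cup\{\gamma\}|-1 \;\ge\; \min\{c,|e|\},
\]
i.e.\ every edge $e$ of $G$ already sees at least $\min\{c,|e|\}$ colors \emph{different from} $\gamma$. Hence you may recolor every $\gamma$-colored vertex of $V$ arbitrarily (to any of the remaining colors) and the resulting coloring of $G$ is still $c$-strong while using at most $\ell-1$ colors. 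That immediately gives $\chi(G',c+1)\ge\chi(G,c)+1$ and finishes the proof. No minimal-witness or tightness argument is needed.
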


\begin{proof}
Let $G$ be a $t$-intersecting hypergraph with $c$-strong chromatic number $\chi(G,c) = \chi(t,c)$.
Define $G'$ to be the $(t+1)$-intersecting hypergraph obtained by adding a new vertex $v$ and including it in each of
the edges of $G$.  Since $\chi(t+1, c+1) \geq \chi(G', c+1)$, it suffices to show that $\chi(G', c+1) \ge \chi(G, c)+1 = \chi(t, c)+1$.

Consider any $(c+1)$-strong coloring of $G'$ that uses  $\ell$ colors.  For each edge $e \cup \{v\}$ of $G'$,
this coloring must assign at least $\min\{c+1,|e|+1\}$ distinct colors to the vertices covered by this edge. This implies
that the vertices in the edge $e$ (without the new vertex $v$) must be colored by $\min\{c,|e|\}$ distinct colors
that are all different from the color assigned to $v$.  Since this is true for any edge $e$ of $G$, we obtain a 
$c$-strong coloring of $G$ with $\ell-1$ colors by arbitrarily recoloring any vertex of $G$ that received the same 
color as $v$. Therefore, $\chi(G', c+1) \ge \chi(G, c)+1$, as we wanted to show.
\end{proof}

Proposition~\ref{prop:increasing} immediately implies that $\chi(t,c)$ is unbounded
whenever $t \le c - 2$.

\begin{corollary}
For any $c \geq 2$ and any $t \leq c-2$, we have $\chi(t, c) = \infty$.
\end{corollary}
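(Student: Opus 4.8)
The corollary is an immediate consequence of Proposition~\ref{prop:increasing}, so the plan is simply to iterate that proposition until we land in a base case whose chromatic number we already know to be infinite. The base case should be the pair $(t,c) = (0,2)$: Proposition~\ref{prop:weak} tells us $\chi(0,2) = \infty$, since every hypergraph is $0$-intersecting and no finite number of colors suffices to (weakly) color all of them.

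First I would fix $c \ge 2$ and $t \le c-2$, and set $k = t \ge 0$. Applying Proposition~\ref{prop:increasing} repeatedly, starting from the pair $(0,2)$ and incrementing both coordinates $k$ times, gives the chain
\begin{equation*}
\chi(k, k+2) \;\ge\; \chi(k-1, k+1) + 1 \;\ge\; \cdots \;\ge\; \chi(0, 2) + k \;=\; \infty.
\end{equation*}
So $\chi(t, t+2) = \infty$. Now I would invoke the monotonicity in $c$ for fixed $t$: since a $c$-strong coloring of a hypergraph $G$ is in particular a $(t+2)$-strong coloring whenever $c \ge t+2$ — each edge $e$ gets at least $\min\{c,|e|\} \ge \min\{t+2,|e|\}$ colors — any valid $c$-strong coloring is also a valid $(t+2)$-strong coloring, hence $\chi(t, c) \ge \chi(t, t+2) = \infty$ for every $c \ge t+2$, which is exactly the range $t \le c-2$.

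There is essentially no obstacle here; the only things to be slightly careful about are (i) confirming that the induction bottoms out correctly — after $t$ applications of Proposition~\ref{prop:increasing} the pair $(0,2)$ becomes $(t, t+2)$, so the condition $t \le c-2$ is precisely what guarantees we reach or pass this pair — and (ii) getting the direction of the $c$-monotonicity right, namely that increasing $c$ makes the coloring requirement \emph{harder}, so $\chi(t,c)$ is non-decreasing in $c$. Both are routine, so the corollary follows.
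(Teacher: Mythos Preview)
Your proof is correct and uses essentially the same idea as the paper: iterate Proposition~\ref{prop:increasing} to reduce to a base case where all hypergraphs are included. The paper's version is marginally more direct---it decrements from $(t,c)$ down to $(0,c-t)$ and observes that $c-t \ge 2$, so the base case $\chi(0,c-t)=\infty$ is immediate and no separate $c$-monotonicity step is needed.
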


\begin{proof}
Applying Proposition~\ref{prop:increasing} a total of $t$ times, we obtain
$$
\chi(t,c) \ge \chi(t-1,c-1) \ge \chi(t-2,c-2) \ge \cdots \ge \chi(0,c-t) \ .
$$
But when $c-t \ge 2$, no finite number of colors is sufficient to $(c-t)$-strong color all $0$-intersecting hypergraphs 
since this class includes all hypergraphs.
\end{proof}

The following two propositions give the lower bounds on $\chi(t,c)$ when $t \ge c-1$.

\begin{proposition}
For any $c \geq 2$, we have $\chi(c - 1, c) \geq 2c - 1$.
\end{proposition}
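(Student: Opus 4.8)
The plan is to exhibit, for each $c \ge 2$, an explicit $(c-1)$-intersecting hypergraph $G$ that cannot be $c$-strong colored with fewer than $2c-1$ colors. The natural candidate is built by "padding" the complete graph $K_3$ in the spirit of Proposition~\ref{prop:increasing}: take a fixed set $S$ of $c-1$ common vertices, three disjoint pairs $\{a_1,b_1\},\{a_2,b_2\},\{a_3,b_3\}$ of fresh vertices, and let the three edges be $e_{ij} = S \cup \{a_i,b_i,a_j,b_j\}$ for $1 \le i < j \le 3$. Any two of these edges share $S$ together with one of the pairs, so they intersect in $c-1+2 = c+1 \ge c-1$ vertices; in particular $G$ is $(c-1)$-intersecting. (One could instead use three pairwise-intersecting-in-$S$ edges each with one extra vertex, but having two extra vertices per "branch" is what forces the count up to $2c-1$ rather than $2c-2$.)

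Next I would argue the lower bound on the number of colors. Suppose we have a valid $c$-strong coloring. Each edge $e_{ij}$ has size $c+3 > c$, so it must see at least $c$ distinct colors among its $c+3$ vertices. I would first handle $S$: if the $c-1$ vertices of $S$ used fewer than $c-1$ colors, then within any one edge $e_{ij}$ the four vertices $a_i,b_i,a_j,b_j$ would have to supply the remaining colors, which is fine for one edge but I expect a short argument using two edges sharing a pair to pin this down; alternatively, and more cleanly, I would instead take $S$ to be a single edge's worth of structure — actually the cleanest route is to also include $S$ itself (or rather force $S$ to be monochromatic-avoiding) — so let me instead just directly count. The key inequality: fix the colors appearing on $S$; call this set $C_S$. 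For edge $e_{ij}$, the colors outside $C_S$ that appear must come from the four vertices $a_i, b_i, a_j, b_j$, and there must be at least $c - |C_S|$ of them. Summing the "new colors needed" over the three edges, and noting each pair $\{a_i,b_i\}$ contributes to exactly two edges and can introduce at most $2$ colors total, one gets $3(c - |C_S|) \le 2 \cdot (\text{colors on } \bigcup\{a_i,b_i\} \setminus C_S) \le$ ... and combining with $|C_S| \le c-1$ should yield that the total palette has size at least $2c-1$. The honest accounting is the step to get right.

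The main obstacle I anticipate is the bookkeeping in that double-counting step: making sure the bound is $2c-1$ and not $2c-2$ or $2c$ requires choosing the gadget and the argument so that the three "branches" cannot cooperatively reuse colors. One robust way to avoid any slack is to argue by contradiction from a coloring with only $2c-2$ colors and derive that two of the three branches must be "colored identically" relative to $C_S$ in a way that makes some edge see only $c-1$ colors — a pigeonhole on how the (at most) $c-1$ non-$S$ colors can be distributed among three pairs. I would also double-check the degenerate small cases ($c=2$ recovers $\chi(1,2)\ge 3$, matching Proposition~\ref{prop:weak}) as a sanity check, and confirm that edges of size $\ge c$ is automatic here since $c+3 > c$. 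Finally, I would remark that this matches the construction philosophy behind Proposition~\ref{prop:increasing}, so the reader sees the two lower-bound propositions as variations on one theme.
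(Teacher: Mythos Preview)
Your construction does not work: the hypergraph $G$ you build can be $c$-strong colored with only $c$ colors. Assign $c-1$ distinct colors to the vertices of $S$ and a single new color to all six vertices $a_1,b_1,a_2,b_2,a_3,b_3$. Every edge $e_{ij}$ then sees the $c-1$ colors on $S$ plus the one extra color, hence exactly $c$ distinct colors, and since $|e_{ij}| = c+3 \ge c$ this is a valid $c$-strong coloring. Thus $\chi(G,c) \le c < 2c-1$, and no amount of double-counting can extract the bound $2c-1$ from this $G$. Your own sanity check at $c=2$ already exposes the problem: there $S$ is a single vertex and the three edges have size $5$; coloring that vertex red and everything else blue is a proper weak coloring with two colors.

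The reason the ``pad $K_3$'' intuition collapses is that your edges have size $c+3$, far larger than $c$, so there is enormous slack: once $S$ contributes $c-1$ colors, a single extra color anywhere in the edge finishes the job. The paper closes this slack by taking $G = \bigl([3c-3], \binom{[3c-3]}{2c-2}\bigr)$: edges of size exactly $2c-2$, pairwise intersecting in at least $c-1$ vertices, and---crucially---\emph{all} $(2c-2)$-subsets are edges. A short pigeonhole then shows that in any coloring with at most $2c-2$ colors the $c-1$ most frequent color classes together cover at least $2(c-1)$ vertices, and those vertices form an edge witnessing only $c-1$ colors. Three hand-picked edges are not enough; you need the full family so that any large union of color classes is forced to contain an edge.
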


\begin{proof}
Fix $c \geq 2$ and consider the hypergraph $G = \big([3c - 3], { [3c - 3] \choose 2c - 2} \big)$. This hypergraph is
$(c-1)$-intersecting and all its edges have size $2c-2$.
Consider any coloring of the vertices in $G$ that uses at most $2c-2$ colors.
The most common $c-1$ colors in such a coloring must cover at least
$$
(c - 1) \left\lceil \frac {3c - 3}{2c - 2} \right\rceil = (c - 1)\cdot 2 = 2c - 2
$$
vertices. So one of the edges of $G$ covers vertices with at most $c-1$ distinct
colors and the coloring of $G$ is not $c$-strong.  Thus,
$\chi(c-1,c) \ge \chi(G,c) \ge 2c-1$.
\end{proof}

\begin{proposition}
For any $t \ge c \geq 2$, we have $\chi(t, c) \geq 2(c - 1)$.
\end{proposition}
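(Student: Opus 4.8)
The plan is to exhibit, for every $t \ge c \ge 2$, an explicit $t$-intersecting hypergraph whose $c$-strong chromatic number is at least $2(c-1)$, mimicking the counting argument used in the preceding proposition for the case $t = c-1$. The natural candidate is the complete uniform hypergraph $G = \big([n], {[n] \choose k}\big)$ for a suitable choice of $k$ and $n$: such a hypergraph is $(2k-n)$-intersecting (any two $k$-subsets of $[n]$ meet in at least $2k-n$ elements), so to make it $t$-intersecting I would pick $k$ and $n$ with $2k - n \ge t$, while keeping $n$ large enough that a pigeonhole argument on the color classes forces some edge to be under-colored.

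Concretely, suppose we have a coloring of $[n]$ using only $2(c-1) - 1 = 2c - 3$ colors. First I would observe that the $c-1$ most frequent color classes together cover at least $\frac{c-1}{2c-3}\, n$ vertices; if $n$ and $k$ are chosen so that this quantity is at least $k$, then those $c-1$ color classes contain a $k$-subset, i.e.\ an edge of $G$ all of whose vertices use at most $c-1$ colors, contradicting $c$-strongness. The constraints are therefore $2k - n \ge t$ and $(c-1)n \ge (2c-3)k$; taking, say, $n = (2c-3)m$ and $k = (c-1)m$ for a large integer $m$ satisfies the second constraint with equality, and the first becomes $(2c-4)m \cdot \tfrac{?}{} $— more simply, $2k - n = (2(c-1) - (2c-3))m = m$, so choosing $m \ge t$ makes $G$ genuinely $t$-intersecting. (One should also double-check the edges of $G$ have size $k \ge c$ so that $\min\{c,|e|\} = c$, which holds for $m$ large.) Hence $\chi(G,c) \ge 2c-2 = 2(c-1)$, and since $\chi(t,c) \ge \chi(G,c)$, the proposition follows.

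The only real subtlety — and the step I would be most careful about — is making the pigeonhole bound on the top $c-1$ color classes genuinely yield a full edge rather than merely a large vertex set: one needs the floor/ceiling arithmetic to work out so that $\frac{c-1}{2c-3}n \ge k$ holds as a true inequality over integers, which is why it is convenient to force $(c-1)n = (2c-3)k$ exactly by the divisibility choice of $n$ and $k$. Everything else is routine. I would present the argument in three short steps: (i) fix the parameters and verify $G$ is $t$-intersecting with edges of size $\ge c$; (ii) run the counting argument to show any $(2c-3)$-coloring leaves some edge with $\le c-1$ colors; (iii) conclude $\chi(t,c) \ge \chi(G,c) \ge 2(c-1)$.
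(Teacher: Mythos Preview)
Your proposal is correct and follows essentially the same approach as the paper: both exhibit a complete uniform hypergraph $\binom{[n]}{k}$ with $2k-n \ge t$ and argue by averaging that under any $(2c-3)$-coloring the $c-1$ largest color classes already contain a $k$-edge. The paper takes $n=(6c-1)t$, $k=3ct$ (yielding a strict inequality in the pigeonhole step), whereas your choice $n=(2c-3)m$, $k=(c-1)m$ with $m\ge t$ hits the threshold with equality---which, as you note, is still enough.
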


\begin{proof}
Fix $t \ge c \ge 2$ and consider the hypergraph $G = \big( [(6c-1)t], {[(6c-1)t] \choose 3ct} \big)$.
The hypergraph $G$ is $t$-intersecting and all its edges have size $3ct$.  
Consider any coloring of the vertices in $G$ that uses at most $2c-3$ colors.
The most common $c-1$ colors in such a coloring must cover at least
$$
 \left\lceil \frac{c-1}{2c-3} (6c-1)t \right\rceil =
 \left\lceil 3ct + \frac{2c-1}{2c-3}t \right\rceil > 3ct
$$
vertices. So one of the edges of $G$ covers vertices with at most $c-1$ distinct colors and the
coloring cannot be $c$-strong. Thus, $\chi(t,c) \ge \chi(G,c) \ge 2(c-1)$.
\end{proof}

\section{Probabilistic upper bound}
\label{sec:ub}

For a fixed $0 < p < 1$, the \emph{$p$-biased measure} of a family $\calF$ over $[n]$ is 
$
\mu_p(\calF) := \Pr_{S}[ S \in \calF],
$
where the probability over $S$ is obtained by including each element $i \in [n]$ in $S$
independently with probability $p$.  Such a set $S$ is called a \emph{$p$-biased subset} of $[n]$.
Dinur and Safra~\cite{dinur2005hardness} showed that when $p$
is small enough, $2$-intersecting families have small $p$-biased measure. Friedgut~\cite{friedgut2008measure}
showed how the same result also extends to $t$-intersecting families for every $t > 2$.

\begin{theorem}[Dinur and Safra~\cite{dinur2005hardness}; Friedgut~\cite{friedgut2008measure}]
\label{thm:DS}
Fix $t \ge 1$. Let $\calF$ be a $t$-intersecting family.
For any $p < \frac1{t+1}$, the $p$-biased measure of $\calF$ is bounded by
$
\mu_p(\calF) \le p^t.
$
\end{theorem}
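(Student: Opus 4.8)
The plan is to reduce the statement to a known extremal problem for set systems and then to invoke, or reprove, the relevant exact result. I would first reduce to the case that $\calF$ is \emph{monotone} (an up-set): passing to the up-closure cannot decrease $\mu_p(\calF)$ and preserves being $t$-intersecting, since $S\supseteq F$ and $S'\supseteq F'$ with $|F\cap F'|\ge t$ force $|S\cap S'|\ge t$; one can moreover apply the standard left-compression (shifting) operators, which also preserve $t$-intersectingness and $\mu_p$ (shifting keeps all set sizes fixed), so one may further assume $\calF$ is shifted. The configuration to keep in mind is the \emph{$t$-star} $\calF^\star=\{S:[t]\subseteq S\}$, for which $\mu_p(\calF^\star)=p^t$ exactly, so the claimed bound is tight. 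The hypothesis $p<\frac{1}{t+1}$ is precisely the range in which the $t$-star beats the competing candidates, the Frankl families $\mathcal{A}_i=\{S:|S\cap[t+2i]|\ge t+i\}$: a one-line computation gives $\mu_p(\mathcal{A}_1)=p^{t+1}\bigl(t+2-(t+1)p\bigr)$, which exceeds $p^t$ exactly when $p>\frac{1}{t+1}$. So the real content is that below the threshold no $t$-intersecting family can do better than the $t$-star, and a proof should make the value $\frac{1}{t+1}$ appear on the nose.

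From here I see two routes. The first is to reduce to the Ahlswede--Khachatrian complete intersection theorem, which says that a $t$-intersecting family inside a slice $\binom{[N]}{k}$ has size at most $\binom{N-t}{k-t}$ once $N$ is large relative to $k$ — precisely the regime forced by $k/N<\frac{1}{t+1}$ — and whose density $\binom{N-t}{k-t}/\binom{N}{k}$ then tends to $(k/N)^t$. For a monotone family the density in the slice $\binom{[n]}{k}$ is non-decreasing in $k$ and $\mu_p(\calF)$ is a $\mathrm{Bin}(n,p)$-average of these slice densities, so one wants to identify $\mu_p(\calF)$ with a limit of uniform densities of $t$-intersecting slice families with $k/N\to p$; establishing this equivalence — in particular producing, from $\calF$ over a fixed $[n]$, genuine $t$-intersecting families in large slices of the right fractional size — is the delicate step of this route. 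The second route is the Fourier-analytic one of Friedgut: encode $t$-intersectingness as $\langle\mathbf{1}_\calF, A\,\mathbf{1}_\calF\rangle_{\mu_p}=0$ for the self-adjoint conflict operator $A$ with kernel $A(S,T)=\mu_p(T)\cdot[\,|S\cap T|<t\,]$, diagonalize $A$ using its invariance under coordinate permutations (for $t=1$ the kernel even factors over coordinates as $\prod_i B(S_i,T_i)$ with $B=\left(\begin{smallmatrix}1-p & p\\ 1-p & 0\end{smallmatrix}\right)$, so its eigenvalues are products of those of $B$), and bound $\mu_p(\calF)=\widehat{\mathbf{1}_\calF}(\emptyset)$ using this spectral constraint. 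The subtlety here is that $A$ is \emph{not} regular — the all-ones vector is not an eigenvector — so the textbook Hoffman/ratio bound does not apply verbatim, and one must symmetrize and track the spectrum carefully so that the threshold $\frac{1}{t+1}$ emerges from the eigenvalue bookkeeping.

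Either way, the crux — and the place where I expect essentially all of the difficulty to sit — is pinning down the phase transition at $p=\frac{1}{t+1}$: it is the point at which the Frankl families overtake the $t$-star, equivalently the endpoint of the Ahlswede--Khachatrian window, equivalently the place where the relevant eigenvalue ordering flips. A naive induction on $n$ — split one coordinate into the deletion $\calF_0$, which is still $t$-intersecting, and the link $\calF_1$, which is now only $(t-1)$-intersecting, so that $\mu_p(\calF)=(1-p)\,\mu_p(\calF_0)+p\,\mu_p(\calF_1)$ — loses a constant factor at each step and by itself reaches only a bound of the form $2p^t$; closing that gap seems to require carrying along the full structural description of the near-extremal families, which is exactly the content established by Dinur--Safra for small $t$ and by Friedgut in general.
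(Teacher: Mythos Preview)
The paper does not prove this statement: Theorem~\ref{thm:DS} is quoted as a result of Dinur--Safra and Friedgut and is used as a black box, with no argument given. So there is no ``paper's own proof'' to compare against; the authors simply cite the references and move on to the easy Corollary~\ref{coro:DS}.

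Your write-up is therefore not a proof proposal that can be checked against the paper, but rather a (reasonable) outline of how the cited references go about it. The content you give is broadly accurate: the reduction to monotone shifted families, the role of the Frankl families $\mathcal{A}_i$ and the appearance of the threshold $p=\tfrac{1}{t+1}$, and the two standard routes (via Ahlswede--Khachatrian on slices, or via Friedgut's spectral argument) are exactly the ingredients in the literature. You are also honest that neither route is completed in your sketch --- the slice-to-biased-measure transfer and the full spectral bookkeeping are precisely where the work of \cite{dinur2005hardness,friedgut2008measure} lies --- so as written this is a roadmap rather than a proof. For the purposes of this paper, no more is needed: the authors treat Theorem~\ref{thm:DS} as an imported result, and your outline is consistent with, but goes well beyond, what the paper itself supplies.
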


We  obtain upper bounds on the chromatic number of intersecting hypergraphs by applying an 
immediate corollary of Theorem~\ref{thm:DS}.

\begin{corollary}
\label{coro:DS}
Fix $t \ge 1$. Let $\calF$ be a $t$-intersecting family. 
For any $p < \frac1{t+1}$, the probability that a $p$-biased subset of $[n]$ contains
a set $S \in \calF$ is at most $p^t$.
\end{corollary}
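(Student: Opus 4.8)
The plan is to reduce the claim directly to Theorem~\ref{thm:DS} by passing to the upward closure of $\calF$. Let $\calF^{\uparrow} := \{ T \subseteq [n] : S \subseteq T \text{ for some } S \in \calF\}$ be the monotone family generated by $\calF$. The event that a $p$-biased subset $T$ of $[n]$ contains some member of $\calF$ is exactly the event that $T \in \calF^{\uparrow}$, so the probability we need to bound is precisely $\mu_p(\calF^{\uparrow})$.

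The only real step is to verify that $\calF^{\uparrow}$ is still $t$-intersecting. Given $A, B \in \calF^{\uparrow}$, choose $S_A, S_B \in \calF$ with $S_A \subseteq A$ and $S_B \subseteq B$. Since $S_A \cap S_B$ is contained in both $A$ and $B$, we get $|A \cap B| \ge |S_A \cap S_B| \ge t$, using that $\calF$ is $t$-intersecting. (Applying this with $A = B$, together with the standard convention that the members of a $t$-intersecting family have size at least $t$, also handles the degenerate case of a single set.) With $\calF^{\uparrow}$ shown to be $t$-intersecting and $p < \frac1{t+1}$, Theorem~\ref{thm:DS} gives $\mu_p(\calF^{\uparrow}) \le p^t$, which is exactly the desired bound.

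Since the argument is essentially a one-line reduction once the closure is in hand, there is no genuine obstacle here; the only point needing any care is the monotonicity of the $t$-intersecting property under taking upward closures, which follows from the elementary containment $S_A \cap S_B \subseteq A \cap B$.
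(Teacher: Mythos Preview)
Your proof is correct and follows essentially the same approach as the paper: pass to the upward closure $\calF^{\uparrow}$ (which the paper calls $\calF'$), observe that the event in question is exactly membership in this closure, and apply Theorem~\ref{thm:DS}. The only difference is that you spell out why $\calF^{\uparrow}$ remains $t$-intersecting, which the paper merely asserts.
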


\begin{proof}
Fix $\calF$ to be some $t$-intersecting family and define $\calF'$ to be the 
$t$-intersecting family obtained from $\calF$
by adding any set which contains a member of $\calF$. That is,
$\calF' = \{ T' \subseteq [n] \mid \exists T \in \calF \mbox{ s.t. }  T \subseteq T' \}$.
Fix $p < \frac1{t+1}$ and let $S \subseteq [n]$ be a random $p$-biased subset of $[n]$.
The set $S$ contains some set of $\calF$ if and only if $S \in \calF'$.
By Theorem~\ref{thm:DS}, the probability that this 
event occurs is at most $p^t$.
\end{proof}

We use the corollary to argue that when $\ell$ is large enough, a random $\ell$-coloring of a 
$t$-intersecting hypergraph is $c$-strong with positive probability. 

\begin{theorem}
\label{thm:ub}
For every $t \geq c \geq 2$, let $\ell$ be an integer that satisfies $\ell > (c-1)(t+1)$ and
$$
 {\ell \choose c -1} \left( \frac{c-1}{\ell} \right)^t < 1 \ .
$$
Then $\chi(t,c) \le \ell$. In particular, since $\ell = t^t$ satisfies both conditions, 
$\chi(t,c)$ is finite.
\end{theorem}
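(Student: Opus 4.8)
The plan is to use the probabilistic method, with Corollary~\ref{coro:DS} supplying the only nontrivial input. Fix an arbitrary $t$-intersecting hypergraph $G$ on vertex set $[n]$; it suffices to produce a $c$-strong coloring of $G$ using $\ell$ colors. First I would dispose of the degenerate cases: if $G$ has at most one edge $e$, then a coloring assigning $\min\{c,|e|\} \le c \le \ell$ distinct colors among the vertices of $e$ (and anything elsewhere) is $c$-strong, so we may assume $G$ has at least two edges. In that case every edge shares at least $t$ vertices with some other edge and hence has size at least $t \ge c$, so $\min\{c,|e|\}=c$ for every edge $e$; thus a coloring is $c$-strong exactly when every edge sees at least $c$ distinct colors. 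Write $\calF$ for the (nonempty, $t$-intersecting) family of edges of $G$.

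The core step is to color each vertex of $[n]$ independently and uniformly at random with one of the $\ell$ colors and to bound the probability that some edge is \emph{bad}, i.e.\ sees at most $c-1$ distinct colors. Fix a set $C$ of $c-1$ colors and let $V_C \subseteq [n]$ be the set of vertices whose assigned color lies in $C$. The observation that makes everything work is that each vertex lies in $V_C$ independently with probability $p := (c-1)/\ell$, so $V_C$ is a $p$-biased subset of $[n]$, and an edge $e$ uses only colors from $C$ precisely when $e \subseteq V_C$. Hence the event ``some edge of $G$ uses only colors from $C$'' is exactly the event that $V_C$ contains a member of $\calF$. The first hypothesis $\ell > (c-1)(t+1)$ is equivalent to $p < \tfrac1{t+1}$, so Corollary~\ref{coro:DS} bounds the probability of this event by $p^t = \left(\tfrac{c-1}{\ell}\right)^t$.

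Now union-bound over the $\binom{\ell}{c-1}$ choices of $C$ (note $\ell \ge c-1$, so these make sense, and any bad edge uses a color set contained in some such $C$): the probability that $G$ has a bad edge is at most $\binom{\ell}{c-1}\left(\tfrac{c-1}{\ell}\right)^t$, which is strictly less than $1$ by the second hypothesis. Therefore with positive probability the random coloring gives every edge at least $c$ distinct colors, so such a $c$-strong coloring exists; since $G$ was arbitrary, $\chi(t,c)\le\ell$. Finally, to check $\ell=t^t$ qualifies, I would verify the two conditions directly: $(c-1)(t+1)\le(t-1)(t+1)=t^2-1<t^2\le t^t$ for $t\ge 2$; and using $\binom{t^t}{c-1}\le(t^t)^{c-1}$ together with $(c-1)^t\le(t-1)^t<t^t$ gives $\binom{t^t}{c-1}\left(\tfrac{c-1}{t^t}\right)^t< t^{t(c-1)+t-t^2}=t^{t(c-t)}\le 1$ since $c\le t$. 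The only delicate bookkeeping is tracking these exponents; the genuine conceptual step is recognizing that the vertices receiving a color from a fixed palette $C$ form a $p$-biased set, which is what lets the Dinur--Safra/Friedgut bound be applied at all. I expect that reformulation to be the main point, with the remaining union bound and arithmetic being routine.
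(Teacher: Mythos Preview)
Your proposal is correct and follows essentially the same route as the paper: color vertices uniformly at random from $[\ell]$, observe that the vertices receiving colors from a fixed $(c-1)$-palette form a $p$-biased set with $p=(c-1)/\ell<1/(t+1)$, apply Corollary~\ref{coro:DS}, and union bound over palettes. You are in fact slightly more careful than the paper, explicitly handling the degenerate case of at most one edge (so that $\min\{c,|e|\}=c$ thereafter) and verifying the ``in particular'' claim that $\ell=t^t$ satisfies both hypotheses; the paper omits both of these details.
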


\begin{proof}
Let $G = ([n], E)$ be a $t$-intersecting hypergraph and let $\ell$ be an integer
that satisfies both conditions of the theorem.
Consider a random coloring of $G$ where each vertex is assigned a color that is chosen
independently and uniformly at random from $[\ell]$.  
Fix $C$ to be a set of $c-1$ colors.  The set $S$ of vertices that receive one of the colors in $C$
is a random subset of $[n]$ where each element is included in $S$ independently with probability
$p = \frac{c-1}{\ell} < \frac1{t+1}$.  By Corollary~\ref{coro:DS}, the probability that $S$ contains
any edge in $E$ is at most $(\frac{c-1}{\ell})^t$.  Applying the union bound over all possible choices
of $c-1$ colors, the probability that some edge in $G$ contains vertices that have at most $c-1$
colors is at most ${\ell \choose c-1}(\frac{c-1}{\ell})^t < 1$. Therefore, 
there exists a $c$-strong coloring of $G$ that requires only $\ell$ colors.
\end{proof}

\begin{remark}
The proof of Theorem~\ref{thm:ub} does more than is required for establishing the value
of $\chi(t,c)$. It shows that when $\ell$ is large enough, a \emph{random} coloring of a
$t$-intersecting hypergraph with $\ell$ colors is $c$-strong with high probability.
\end{remark}

Theorem~\ref{thm:ub} yields different upper bounds for different values of $t$ with respect to a given $c$.
When $t = c$, the best bound obtained by the theorem is exponential in $c$.

\begin{corollary}
For every $c \geq 2$, $\chi(c, c) < \sqrt{c} \cdot e^{c}$.
\end{corollary}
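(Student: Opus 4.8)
The plan is to derive the bound as a direct instantiation of Theorem~\ref{thm:ub} at $t = c$: I will produce an integer $\ell$ with $\ell < \sqrt{c}\,e^{c}$ that meets both hypotheses of that theorem, namely $\ell > (c-1)(c+1)$ and ${\ell \choose c-1}\big(\tfrac{c-1}{\ell}\big)^{c} < 1$, so that $\chi(c,c) \le \ell < \sqrt{c}\,e^{c}$. A convenient choice is $\ell := \lfloor \sqrt{c}\,e^{c-1}\rfloor + 1$, which clearly satisfies $\sqrt{c}\,e^{c-1} < \ell \le \sqrt{c}\,e^{c-1}+1$; the upper bound $\ell < \sqrt{c}\,e^{c}$ then holds because $\sqrt{c}\,e^{c} - \sqrt{c}\,e^{c-1} = \sqrt{c}\,e^{c-1}(e-1) > 1$ for every $c \ge 2$, so there is always room between the two thresholds.

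It remains to verify the two hypotheses for this $\ell$. For the product condition I would bound the binomial coefficient crudely by ${\ell \choose c-1} \le \ell^{c-1}/(c-1)!$, which turns the requirement into $\ell > (c-1)^{c}/(c-1)!$; applying Stirling's inequality $(c-1)! \ge \sqrt{2\pi(c-1)}\,\big((c-1)/e\big)^{c-1}$ gives $(c-1)^{c}/(c-1)! \le \sqrt{(c-1)/(2\pi)}\;e^{c-1} < \sqrt{c}\,e^{c-1} < \ell$, as needed. For the condition $\ell > (c-1)(c+1) = c^{2}-1$ it suffices to check $\sqrt{c}\,e^{c-1} > c^{2}-1$: this I would verify directly at $c=2$ (where $\sqrt{2}\,e \approx 3.84 > 3$), and for $c \ge 3$ it follows from $c-1 > \tfrac32\ln c$ — an inequality that holds at $c=3$ and remains valid for larger $c$ since the difference $(c-1)-\tfrac32\ln c$ is increasing in $c$ for $c \ge 2$ — which even yields the stronger $\sqrt{c}\,e^{c-1} > c^{2}$.

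Combining these observations, Theorem~\ref{thm:ub} applies and gives $\chi(c,c) \le \ell < \sqrt{c}\,e^{c}$. I do not expect any genuine obstacle: the only point requiring a little care is the small-$c$ bookkeeping for the condition $\ell > c^{2}-1$, where the exponential term only narrowly dominates when $c=2$, so one should keep the ``$-1$'' rather than comparing against $c^{2}$. Everything else is the standard pair of estimates ${n \choose k}\le n^{k}/k!$ and Stirling's formula.
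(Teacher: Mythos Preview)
Your proposal is correct and follows exactly the approach implicit in the paper: the corollary is stated without proof, as a direct specialization of Theorem~\ref{thm:ub} at $t=c$, and your choice of $\ell$ together with the standard estimates ${\ell \choose c-1}\le \ell^{c-1}/(c-1)!$ and Stirling's lower bound on $(c-1)!$ is precisely the computation one is meant to carry out. Your handling of the two hypotheses, including the separate check at $c=2$, is clean and complete.
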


When $t = 2c$, the bound is already much stronger and shows that the chromatic number
$\chi(t,c)$ is polynomial in $c$.

\begin{corollary}\label{coro:two-c}
For every $c \geq 2$ and $t \geq 2c$, $\chi(t, c) < 2c^2$.
\end{corollary}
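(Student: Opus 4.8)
The plan is to combine Theorem~\ref{thm:ub} with the monotonicity of $\chi(t,c)$ in its first argument. Since every $(t+1)$-intersecting hypergraph is also $t$-intersecting, we have $\chi(t,c) \le \chi(2c,c)$ for every $t \ge 2c$, so it will suffice to bound $\chi(2c,c)$. I would then invoke Theorem~\ref{thm:ub} with its parameter $t$ set to $2c$ (noting $2c \ge c \ge 2$) and with $\ell := 2c^2 - 1$, the largest integer strictly below $2c^2$. If both hypotheses of the theorem hold for this choice, we get $\chi(2c,c) \le 2c^2 - 1 < 2c^2$, which is the desired bound.

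Checking the first hypothesis, the requirement $\ell > (c-1)(t+1)$ becomes $2c^2 - 1 > (c-1)(2c+1) = 2c^2 - c - 1$, which holds for every $c \ge 1$. For the second hypothesis I would bound the binomial coefficient crudely by ${\ell \choose c-1} \le \ell^{c-1}$, giving
$$
{\ell \choose c-1}\left(\frac{c-1}{\ell}\right)^{2c} \;\le\; \ell^{c-1}\cdot\frac{(c-1)^{2c}}{\ell^{2c}} \;=\; \frac{(c-1)^{2c}}{\ell^{c+1}} \;.
$$
It then remains to verify $\ell^{c+1} > (c-1)^{2c}$. This follows from the chain $\ell = 2c^2 - 1 > (c-1)^2$ (valid for all $c \ge 2$, since $c^2 + 2c - 2 > 0$), which yields $\ell^{c+1} > (c-1)^{2(c+1)} \ge (c-1)^{2c}$.

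None of these steps is hard; the computations are elementary and leave a comfortable margin. The only point that requires any thought is choosing $\ell$ so that it is simultaneously large enough to push the probabilistic bound in the second hypothesis below $1$ and small enough --- strictly below $2c^2$ --- to give the stated conclusion, and the pair $(t,\ell) = (2c,\, 2c^2-1)$ threads this needle with room to spare.
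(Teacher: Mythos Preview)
Your proof is correct and follows exactly the route the paper intends: the corollary is stated without proof, as an immediate application of Theorem~\ref{thm:ub}, and your argument supplies the natural verification by taking $t=2c$, $\ell=2c^2-1$, and checking both hypotheses. The monotonicity reduction and the crude estimate $\binom{\ell}{c-1}\le \ell^{c-1}$ are precisely what one expects here.
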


As $t$ grows beyond $2c+1$, the bound obtained by
Theorem~\ref{thm:ub} does not continue to improve. In fact, it gets much worse.  
Note also that because of the condition $\ell > (c-1)(t+1)$, the theorem
does not yield a sub-quadratic upper bound on $\chi(t,c)$ for any $t \ge c$.

\begin{remark}
The topic of semi-strong coloring of intersecting hypergraphs came up in
the authors' study of property testing of boolean functions~\cite{BWY11}. A common
approach in such testing algorithms is that of implicit learning, where we
randomly partition some domain and identify a small subset of special parts
in the partition. The main obstacle is often to prove that when the
function is far from satisfying the questioned property, no choice of a
small number of special parts would fool the tester. Theorem~\ref{thm:ub},
and particularly Corollary~\ref{coro:two-c}, guarantees that when we randomly partition the
domain into a polynomial number of parts (which are analogous to colors),
with high probability the union of any small number of parts will satisfy
some criteria (such as not completely containing any member of some \emph{bad} 
intersecting family). See~\cite{BWY11} for more details.
\end{remark}

\section*{Acknowledgments}

We thank Noga Alon and Benny Sudakov for helpful discussions and for encouraging us to write  this note.

\bibliographystyle{plain}
\bibliography{coloring_arXiv}

\begin{thebibliography}{1}

\bibitem{BWY11}
Eric Blais, Amit Weinstein, and Yuichi Yoshida.
\newblock Partially symmetric functions are efficiently isomorphism-testable,
  2011.
\newblock Manuscript, {\tt http://arxiv.org/abs/1112.5741}.

\bibitem{dinur2005hardness}
Irit Dinur and Shmuel Safra.
\newblock On the hardness of approximating minimum vertex cover.
\newblock {\em Annals of Mathematics}, 162(1):439--485, 2005.

\bibitem{friedgut2008measure}
Ehud Friedgut.
\newblock On the measure of intersecting families, uniqueness and stability.
\newblock {\em Combinatorica}, 28(5):503--528, 2008.

\bibitem{JT94}
Tommy~R. Jensen and Bjarne Toft.
\newblock {\em Graph Coloring Problems}.
\newblock Wiley, 1994.

\bibitem{MR01}
Michael Molloy and Bruce Reed.
\newblock {\em Graph Colouring and the Probabilistic Method}.
\newblock Springer, 2001.

\end{thebibliography}

\end{document}